\newtheorem{thm}{Theorem}
\numberwithin{thm}{section}
\newtheorem{lem}[thm]{Lemma}
\newtheorem{cor}[thm]{Corollary}
\newcommand{\mise}{mis\`{e}re }
\newcommand{\R}{\mathcal{R}}
\renewcommand{\L}{\mathcal{L}}
\newcommand{\N}{\mathcal{N}}
\renewcommand{\P}{\mathcal{P}}
\renewcommand{\v}{\phantom{'} \vert \phantom{'}}
\newcommand{\lopt}{\boldsymbol{L}}
\newcommand{\ropt}{\boldsymbol{R}}
\newcommand{\bs}{\boldsymbol}
\definecolor{gr}{gray}{0.6}
\def\imod#1{\allowbreak\mkern10mu({\operator@font mod}\,\,#1)}
\begin{document}
\title{Partizan Kayles and Mis\`ere Invertibility}

\author{Rebecca Milley\\
Grenfell Campus\\
Memorial University of Newfoundland\\
Corner Brook, NL, Canada}
\maketitle

\begin{abstract}
The impartial combinatorial game {\sc kayles} is played on a row of  pins, with players  taking turns removing either a single pin or two adjacent pins.  A natural partizan variation is to allow one player to remove only a single pin and the other only a pair of  pins.  
This paper develops a complete solution for {\sc partizan kayles} under mis\`ere play, including the mis\`ere monoid all possible sums of positions, and discusses its significance in the context of mis\`ere invertibility: the universe of {\sc partizan kayles} contains a position whose additive inverse is not its negative, and moreover, this position is an example of  a right-win game whose inverse is previous-win.

\end{abstract}

Keywords: mis\`ere, partizan, invertibility, kayles, dead-ending, placement.

\section{Introduction}
In the  game of {\sc kayles}, two players take turns throwing a bowling ball at a row of pins.  A player either hits dead-on and knocks down a single pin, or hits in-between and knocks down a pair of adjacent pins.  
This game has been analyzed for both normal play (under which the player who knocks down the last pin wins) and mis\`ere play (when the player who knocks down the last pin loses) \cite{GuySmith,SiberC1992, Plamb1992}.  Since both players have the same legal moves, {\sc kayles} is an {\em impartial} game. Although there are several natural non-impartial or {\em partizan} variations, in this paper the rule set of {\sc partizan kayles} is as follows: the player `Left' can only knock down a single pin and the player `Right' can only knock down a pair of adjacent pins.  This game can be seen as a one-dimensional variant of {\sc domineering}, played on strips of squares (representing the rows of pins), with Left placing the bottom half of her vertical dominoes and Right placing his horizontal dominoes as usual.  For notational purposes, we will play `domineering-style', on $1\times n$ strips denoted $S_n$, with Left placing squares and Right placing dominoes, as illustrated in Figure \ref{kaylesexample}.

\begin{figure}[htp]
\begin{center}
\includegraphics[scale=0.45]{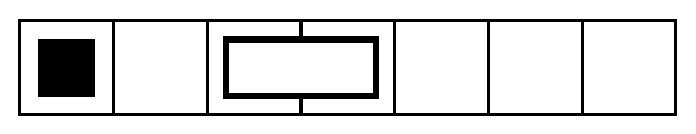}
\caption{A game of {\sc partizan kayles}: after one Left move and one Right move, the original position $S_6$ becomes   $S_1+S_3$.}
\label{kaylesexample}\end{center}
\end{figure}


This paper develops a complete solution for {\sc partizan kayles} under mis\`ere play.
We will see that the set ({\em universe}) of {\sc partizan kayles} positions is remarkable for its unusual examples of invertibility.

It is assumed that the reader is familiar with basic normal-play combinatorial game theory\footnote{A complete overview of normal-play combinatorial game theory can be found in \cite{AlberNW2007}.}.  A brief review of the necessary mis\`ere background is given in Section 1.1; a more detailed overview can be found in \cite{MilleyRenault}.  Section 2 establishes domination of options among {\sc partizan kayles} positions, and Section 3 uses this to show how every strip of length at least three reduces to a disjunctive sum of single squares and strips of length two.  Section 4 then gives the outcome and strategy for a general sum, including the {\em mis\`ere monoid} of the universe of {\sc partizan kayles} positions. Finally, Section 5 discusses the significance of this universe in the context of mis\`ere invertibility.

\subsection{Mis\`ere prerequisites}
A game or {\em position} is defined in terms of its options: $G=\{G^{\lopt} \v G^{\ropt} \}$, where 
$G^{\lopt}$ is the set of positions $G^L$ to which Left can move in one turn, and similarly for $G^{\ropt}$.
  The simplest game is the zero game, $0=\{\cdot \v \cdot\}$, where the dot indicates an empty set of options.
The outcome function $o^-(G)$ gives the \mise outcome of a game $G$.  If $o^-(G)=\P$ we write $G\in \P^-$, where again the superscript  indicates that this is the outcome under mis\`ere play.
Thus, for example, $0\in \N^-$.

Many definitions from normal-play theory are used without modification for \mise games, including  disjunctive sum, equality, and inequality.  However, the normal-play {\em negative} of $G$, $-G =  \{-{G}^{\ropt} | 
{-G}^{\lopt}\}$, is instead called the {\em conjugate} of $G$, denoted $\overline{G}$, since in general we do not have $G+\overline{G}=0$ in mis\`ere play \cite{MesdaO2007}.  Thus $$\overline{G}=\{ \overline{{G}^{\ropt}} | 
\overline{{G}^{\lopt}\}}.$$ 
%

In addition to the well-studied  {\em normal-play canonical form}, every position has a unique {\em mis\`ere canonical form} obtained by eliminating dominated options and bypassing reversible ones \cite{Siege}; note that the definitions of domination and reversibility are indeed dependant on the ending condition, since the definition of inequality is dependant on the ending condition.

Mis\`ere games are much more difficult to analyze than normal-play games (see \cite{Plamb2009}, for example). No position besides $\{\cdot \v \cdot\}$ is equal to $0$ \cite{MesdaO2007}; thus, no non-zero position has an additive inverse, and there is no easy test for equality or inequality of games.  This in turn means that instances of domination and reversibility are rare and hard to establish, so that we cannot take advantage of canonical forms in mis\`ere play as we do in normal play. 

 Some of these problems are mitigated by considering  restricted versions of equality and inequality  \cite{Plamb2005, PlambS2008}.  Let $\mathcal{U}$ be a set of games closed under disjunctive sum and followers (but not necessarily under conjugation). {\em Indistinguishability} or {\em equivalence (modulo} $\mathcal{U}${\em )} is defined by 
$$G\equiv H \textrm{ (mod } \mathcal{U} \textrm{) if } o^-(G+X)= o^-(H+X) \textrm{ for all games } X \in \mathcal{U},$$
while {\em  inequality (modulo} $\mathcal{U}${\em)} is defined by
$$G\geqq H \textrm{ (mod } \mathcal{U} \textrm{) if } o^-(G+X)\geq o^-(H+X) \textrm{ for all games } X \in \mathcal{U}.$$
The set $\mathcal{U}$ is called the {\em universe}.
 If $G\not \equiv H$ (mod $\mathcal{U}$) then $G$ and $H$ are said to be {\em distinguishable} modulo $\mathcal{U}$, and in this case there must be a game $X\in \mathcal{U}$ such that $o^-(G+X)\not = o^-(H+X)$.  If $G\not \geqq H$ (mod $\mathcal{U}$) and $G\not \leqq H$ (mod $\mathcal{U}$) then $G$ and $H$ are {\em incomparable} in $\mathcal{U}$.
 The symbol $\gneqq$ is used to indicate strict modular inequality.  
In this paper it is assumed that both $G$ and $H$ are contained in $\mathcal{U}$ when we compare them modulo $\mathcal{U}$.

Note that indistinguishability is a congruence relation.
Given a universe $\mathcal{U}$, we can determine the equivalence classes under indistinguishability modulo $\mathcal{U}$.  Since we may still not have inverses for every element, the classes  
 form a quotient monoid.  Together with the tetra-partition of
elements into the sets $\P^-$, $\N^-$, $\R^-$, and $\L^-$, this quotient is called the {\em \mise monoid} of the set $\mathcal{U}$,  denoted $\mathscr{M}_{\mathcal{U}}$ \cite{Plamb2005}. 

Indistinguishability and mis\`ere monoids have been successfully used to analyze various impartial \cite{PlambS2008} and  partizan games \cite{Allen2009, MilleyNO, McKayMN}.
 This paper, which summarizes a section of the author's PhD thesis \cite{Milley}, develops the monoid for the universe of {\sc partizan kayles} positions, and discusses its relevance to `restricted' (modulo $\mathcal{U}$) mis\`ere invertibility.  The game of {\sc partizan kayles} is a {\em placement game}, in that  players move by putting pieces on a board, and is thus also {\em dead-ending}, meaning that  once a player has no current move, that player will never have another move.  The universe of dead-ending games is introduced and explored in \cite{MilleyRenault}, and both this and the subuniverse of placement games are exciting areas of future mis\`ere research.

\section{Domination}
The goal of this section is to establish domination of moves in {\sc partizan kayles}, with the concluding and most important result (Corollary \ref{211})  being that $S_2 \gneqq S_1+S_1$ modulo this universe.
Recall that $S_n$ denotes a strip of length $n$. The disjunctive sum of $k$ copies of $G$ is denoted $kG$, so that, for example, $S_1+S_1=2S_1$.  Let $\mathcal{K}$ be the universe of {\sc partizan kayles} positions; that is, $\mathcal{K}$ is the set of all possible sums of positions of the form $S_n$.
 Note that $S_0=\{\cdot \v \cdot\} = {0}$ and $S_1=\{{0} \v \cdot\} ={1}$ (the normal-play canonical-form integers), but  this is not the case for higher values of $n$; for example, $S_2 = \{{1} \v {0}\} \not = {2}$.

It should be immediately apparent to any player of mis\`ere games that this version of {\sc kayles} is heavily biased in favour of Right: Left can always move, if the position is non-zero, while Right cannot move on any sum of single squares.  It is therefore not surprising that there are no left-win positions in this universe, as demonstrated in  Lemma \ref{noleft}. As a consequence, we know that if Left can win playing first in a position $G$ then $G \in \N^-$, and if Left can win playing second in $G$ then $G\in \P^-$. 

\begin{lem} \label{noleft}
If $G \in \mathcal{K}$ then $o^-(G)\not =\L$.
\end{lem}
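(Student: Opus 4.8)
The plan is to prove the statement by strong induction on the total number of squares $N(G)=\sum_i n_i$, where $G=\sum_i S_{n_i}\in\mathcal{K}$, using the recursive description of the misère outcome. Recall that $o^-(G)=\L$ holds precisely when Left wins whether she moves first or she moves second; so it suffices to exhibit, for every $G$, a winning strategy for Right in at least one of the two roles. Throughout I write $\ell(H)$ and $r(H)$ for the winner (Left or Right) when, respectively, Left or Right moves first on a position $H$.

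For the base case, suppose $G$ has no Right option, i.e.\ every strip in $G$ has length at most one (this includes $G=0$). Then Right, moving first, has no legal move and therefore wins under misère play, so $r(G)=R$ and $o^-(G)\in\{\N,\R\}$; in particular $o^-(G)\neq\L$.

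For the inductive step I assume that every proper follower $H$ of $G$ — which again lies in $\mathcal{K}$ and has strictly fewer squares — satisfies $o^-(H)\neq\L$, and I suppose for contradiction that $o^-(G)=\L$. Unwinding the outcome recursion yields two conditions. Because Left wins moving first, she has an option $G^L$ with $r(G^L)=L$, that is $o^-(G^L)\in\{\L,\P\}$; the induction hypothesis excludes $\L$, so some Left option satisfies $o^-(G^L)=\P$. Because Left also wins moving second, $G$ must have at least one Right option and every Right option $G^R$ must satisfy $\ell(G^R)=L$, that is $o^-(G^R)\in\{\L,\N\}$; again the hypothesis excludes $\L$, so every Right option satisfies $o^-(G^R)=\N$. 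It then remains to show that these two conditions cannot hold simultaneously.

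This incompatibility is where the main difficulty lies. Concretely, I must rule out the coexistence of a $\P$-valued Left option (a single-square removal) with a nonempty family of Right options (domino removals) that are \emph{all} $\N$-valued. I do not expect a shortcut via the parity of $N(G)$: the winning role for Right genuinely varies from position to position — for instance $S_3\in\N^-$ forces Right to move first, whereas $S_2+S_3\in\P^-$ forces Right to move second — so no single uniform invariant will close the induction. Instead the plan is to track how Left's removal of one square and Right's removal of an adjacent domino interact on the individual strips: starting from the assumed $\P$-valued option, obtained by deleting some square $x$, I aim to use a square adjacent to $x$ to construct an explicit Right move $G^R$ in $G$ whose value is not $\N$ (equivalently, with $\ell(G^R)=R$), contradicting the second condition and hence the assumption $o^-(G)=\L$. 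Carrying out this construction, with the case distinctions according to the position of $x$ within its strip and whether $x$ is an isolated square, is the principal obstacle I anticipate.
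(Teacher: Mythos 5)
Your proposal sets up a correct induction skeleton --- the base case is right, and the unwinding of $o^-(G)=\L$ into ``some Left option in $\P^-$'' plus ``all Right options exist and lie in $\N^-$'' is sound given the induction hypothesis --- but it stops exactly where the proof has to happen. You say yourself that ruling out the coexistence of these two conditions is ``the principal obstacle I anticipate'': that obstacle is the entire content of the lemma, and the sketched plan (use a square adjacent to the deleted square $x$ to build a Right option $G^R$ with $\ell(G^R)=R$, with cases on where $x$ sits in its strip) is not carried out and is far from routine; an interference analysis of this kind is genuinely delicate in this universe (the paper needs one for its Lemma 2.2, with several nontrivial cases). As submitted, this is a plan, not a proof.

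Moreover, the specific reason you give for abandoning a counting argument is mistaken, and it causes you to miss the short proof. You argue that since $S_3\in\N^-$ while $S_2+S_3\in\P^-$, ``no single uniform invariant will close the induction'' --- but the invariant need not determine the winner uniformly; it only needs to exclude $\L$, and the exclusion can (and does) take a different form in each residue class. The paper's proof is exactly such a counting argument, with no induction on game trees at all: let $m$ be the total number of squares; each Left move removes $1$ square and each Right move removes $2$, and Left can move whenever at least one square remains, whereas in mis\`ere play the first player unable to move wins. If $m\equiv 0 \pmod 3$ and Right moves first, Left begins every turn with $3k+1$ squares, so Left is never the one to run out first and Right wins going first, giving $G\in\R^-\cup\N^-$. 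If $m\equiv 1$, Left begins every turn with $3k+1$ or $3k+2$ squares whether she moves first or second, so $G\in\R^-$. If $m\equiv 2$, any first Left move produces a position with $m\equiv 1$, already shown to be right-win, so $G\in\R^-\cup\P^-$. Your examples ($m=3$ giving $\N^-$, $m=5$ giving $\P^-$) are perfectly consistent with this trichotomy; they refute only the straw-man claim that one residue class should have one fixed outcome. To repair your write-up, replace the unexecuted interference construction with this modulo-$3$ bookkeeping, or else actually carry out the adjacency case analysis in full.
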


\begin{proof}
If $G\in \mathcal{K}$ then $G$ is a sum of positions of the form $S_n$. Let $m$ be the total number of squares in $G$.
Note that each of Right's turns reduces the total number of free squares by 2 and each of Left's moves reduces the number by 1.  

If the total number $m$ is a multiple of 3 and Right plays first, then Left begins each turn with $3k+1$ free squares (for some $k\in \mathbb{N}$); in particular Left never begins a turn with zero free squares, and so can never run out of moves before Right.  This shows Right wins playing first, so $G\in \R^-\cup \N^-$. 

If $m\equiv 1$ (mod $3$) then Left playing first begins each turn with $3k+1$ free squares and Left playing second begins each turn with $3k+2$ free squares; in either case Left cannot run out of moves before Right by the same argument as above.  Here Right wins playing first or second so $G\in \R^-$.

Finally, if $m\equiv 2$ (mod $3$), then Left playing first necessarily moves the game to one in which the total number of squares is congruent to 1 modulo 3, and as shown above this is a right-win position.  Thus Left loses playing first and the game is in $\R^-$ or $\P^-$.
\end{proof}

Since single squares are so detrimental for Left, we might naively\footnote{This strategy is `naive' in that it does not always (or even usually) work for mis\`ere games; for example, the game $\{0\v\cdot\}$ is incomparable with the zero game in general mis\`ere play.}    suspect that Left should get rid of them as quickly as she can.  That is, given a position that contains an $S_1$, Left should do at least as well by playing in the $S_1$ as playing anywhere else.  This is indeed the case, as established in Corollary \ref{leftprefers}. The bulk of the work is done in Lemma \ref{Kstratlemma}.  Lemma \ref{noleft} (that is, the non-existence of left-win positions in $\mathcal{K}$) is used repeatedly without reference in the following proof.

\begin{lem} \label{Kstratlemma}
If $G\in \mathcal{K}$ then $G\geqq G^L+S_1$ (mod $\mathcal{K}$) for all $G^L \in G^{\bs{L}}$.
\end{lem}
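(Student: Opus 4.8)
The plan is first to unwind the definition of modular inequality and then to exhibit an explicit strategy for Left by induction. By definition, $G\geqq G^L+S_1$ (mod $\mathcal{K}$) asserts that $o^-(G+X)\geq o^-(G^L+S_1+X)$ for every $X\in\mathcal{K}$. Because Lemma \ref{noleft} forbids $\L^-$ outcomes throughout $\mathcal{K}$, every position in sight lies in $\P^-$, $\N^-$, or $\R^-$, and the outcome comparison decouples into the two coordinates ``Left wins playing first'' and ``Left wins playing second.'' Hence it is enough to show, for each $X\in\mathcal{K}$, that (a) if Left wins $G^L+S_1+X$ playing first then Left wins $G+X$ playing first, and (b) if Left wins $G^L+S_1+X$ playing second then Left wins $G+X$ playing second.

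I would prove (a) and (b) together by induction on the number of free squares in $G+X$. The guiding idea is a shadow strategy: Left plays the real game $G+X$ while consulting a winning strategy in the ``shadow'' $G^L+S_1+X$, under the correspondence that identifies the real component $G$ with the shadow pair $G^L+S_1$ and leaves the $X$-part common to both. Moves in $X$ are simply copied. The crucial translation is that the shadow move $S_1\to 0$ is realised in the real game as the collapsing move $G\to G^L$: after it, the real game is exactly $G^L+X$ and coincides with the shadow, so from that point Left copies the shadow strategy verbatim and wins. A shadow move interior to $G^L$, say $G^L\to (G^L)^L$, is handled by collapsing $G\to G^L$ and invoking the inductive form of this very lemma applied to the smaller game $G^L$ (namely $G^L\geqq (G^L)^L+S_1$), which guarantees the resulting real position is at least as good for Left. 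With these translations in place, implication (a) reduces to (b), since Left may follow the shadow's winning opening move and then fall into the second-player analysis.

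The main obstacle is Right's move that is local to the $G$-component, namely $G\to G^R$, made while Left is still in the ``pre-collapse'' phase. This move has no counterpart in the shadow game: Right can never play in $S_1$, and in general $(G^L)^{\bs{R}}\neq G^{\bs{R}}$, so the correspondence breaks and the shadow strategy offers no prescription. To close the argument one must show directly that after such a move the position $G^R+X$ is still won by the player to move, i.e.\ that $o^-(G^R+X)$ is good for Left. I expect this to be the heart of the proof and to require a separate analysis of Right's options in $\mathcal{K}$: in particular, re-running the parity/congruence count of Lemma \ref{noleft} on the reduced board, together with the induction hypothesis, to verify that removing a domino from $G$ cannot rescue Right. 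Assembling the $X$-move cases (which close by induction on $|X|$) with this resolved domino case yields both (a) and (b), and hence $G\geqq G^L+S_1$ (mod $\mathcal{K}$).
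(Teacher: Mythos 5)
Your skeleton coincides with the paper's own argument: the paper likewise reduces the modular inequality to outcome-by-outcome implications (no $\L^-$ outcomes exist in $\mathcal{K}$ by Lemma \ref{noleft}, so one shows $\N^-\Rightarrow\N^-$ and $\P^-\Rightarrow\P^-$), translates the shadow move $S_1\to 0$ as the collapse $G\to G^L$, and handles a shadow move $G^L\to G^{LL}$ by induction on the lemma itself. (The paper streamlines your bookkeeping by absorbing $X$ into $G$ at the outset: since $G$ is an arbitrary element of $\mathcal{K}$ and $G^L+X$ is a Left option of $G+X$, it suffices to prove $o^-(G)\geq o^-(G^L+S_1)$.) But the case you correctly single out as ``the heart of the proof'' --- Right playing $G\to G^R$ inside the $G$-component, where the shadow correspondence breaks --- is exactly the bulk of the paper's proof, and you leave it open. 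Worse, the repair you sketch cannot work: the congruence count of Lemma \ref{noleft} sees only the total number of free squares modulo $3$, and so can only place a position in a union of outcome classes. Concretely, if $G^L+S_1\in\P^-$ then $G$ has square-count $\equiv 2\pmod 3$ and $G^R$ has square-count $\equiv 0\pmod 3$, so the parity argument yields only $G\in\R^-\cup\P^-$ and $G^R\in\R^-\cup\N^-$; it is powerless to certify that Left has a winning reply to $G^R$, which is what you need.

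What actually closes this case is a commutation-of-placements argument, not a counting one. Since $G^L+S_1\in\P^-$, every $G^{LR}+S_1\in\N^-$, and by the inductive form of the lemma Left's best move there is to $G^{LR}$; hence every $G^{LR}\in\P^-$. Left then answers $G^R$ by playing her postponed square so as to reach a position $G^{RL}$ \emph{equal} to some $G^{LR}$. When Right's domino does not overlap Left's intended square this is immediate; when it does, the paper runs a three-case geometric analysis (domino at the end of an $S_n$ with $n\geq 4$, at the end of an $S_3$, or filling an $S_2$), in each case exhibiting a substitute square for Left one cell away from the end of a suitable component. Two of these cases need the additional structural fact that the rest of the position cannot consist entirely of $S_1$'s --- otherwise $G^L+S_1$ would lie in $\R^-$, contradicting $G^L+S_1\in\P^-$. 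None of this is recoverable from parity alone, so as it stands your proposal identifies the right strategy framework but omits the decisive idea and the case analysis that constitute most of the actual proof.
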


\begin{proof}
We must show that  $o^-(G+X)\geq o^-(G^L+S_1+X)$, for any $X\in \mathcal{K}$, where $G^L$ is any Left option of $G$.  Since $G$ is already an arbitrary game in $\mathcal{K}$, it suffices to show  $o^-(G)\geq o^-(G^L+S_1)$.  To do so we will show that when $G^L+S_1$ is in $\N^-$, $G$ is also in $\N^-$, and that when $G^L+S_1$ is in $\P^-$, $G$ is also in $\P^-$.  If $G^L+S_1$ is in $\R^-$, then we trivially have $o^-(G)\geq o^-(G^L+S_1$).

Suppose $G^{L}+S_1 \in \N^-$, so that Left has a good first move in $G^{L}+S_1$.  If the good move is to $G^{L}+0=G^L$ then Left has the same good first move in $G$, and so $G\in \N^-$. Otherwise the good move is to $G^{LL}+S_1 \in \P^-$, for some Left option $G^{LL}$ of $G^L$;
but then by induction $o^-(G^L) \geq o^-(G^{LL}+S_1)$ and so $G^L\in \P^-$ and  $G\in \N^-$.

Now suppose $G^L+S_1\in \P^-$. We must show that $G\in \P^-$.  Since Right has no good first move in $G^{L}+S_1$, we have $G^{LR}+S_1 \in \N^-$  for every right option $G^{LR}$ of $G^L$.  So Left has a good first move in $G^{LR}+S_1$;  by induction the move to $G^{LR}$ is at least as good as any other, and so $G^{LR}\in \P^-$  for every right option  $G^{LR}$ of $G^L$.  We will see that there  exists a previous-win Left response $G^{RL}$ to every first Right move $G^R$, by finding a $G^{RL}$ that is equal to some $G^{LR}\in\P^-$. This gives a winning strategy for Left playing second in $G$, proving $G\in \P^-$.

Let $G^{R}$ be any right option.  If the domino placed by Right to move from $G$ to $G^R$ would not overlap the square placed by Left to move from $G$ to $G^{L}$, then Left can place that square now, achieving a position $G^{RL}$ equal to some $G^{LR}$, which we know to be in $\P^-$.  This is a good second move for Left in $G$, so $G\in \P^-$.  If Right's move from $G$ to $G^{R}$ {\em does} interfere with Left's move from $G$ to $G^{L}$, then we will see that Left can still move $G^R$ to a position equal to some $G^{LR}$.  If there are free squares adjacent to both sides of the domino Right places for $G^R$, then Left can respond to $G^R$ by playing in one of those squares, so that the resulting position $G^{RL}$ equal to a position $G^{LR}\in \P^-$.  This is illustrated in Figure \ref{case1}.

\begin{figure}[htp]
\begin{center}
\includegraphics[scale=0.45]{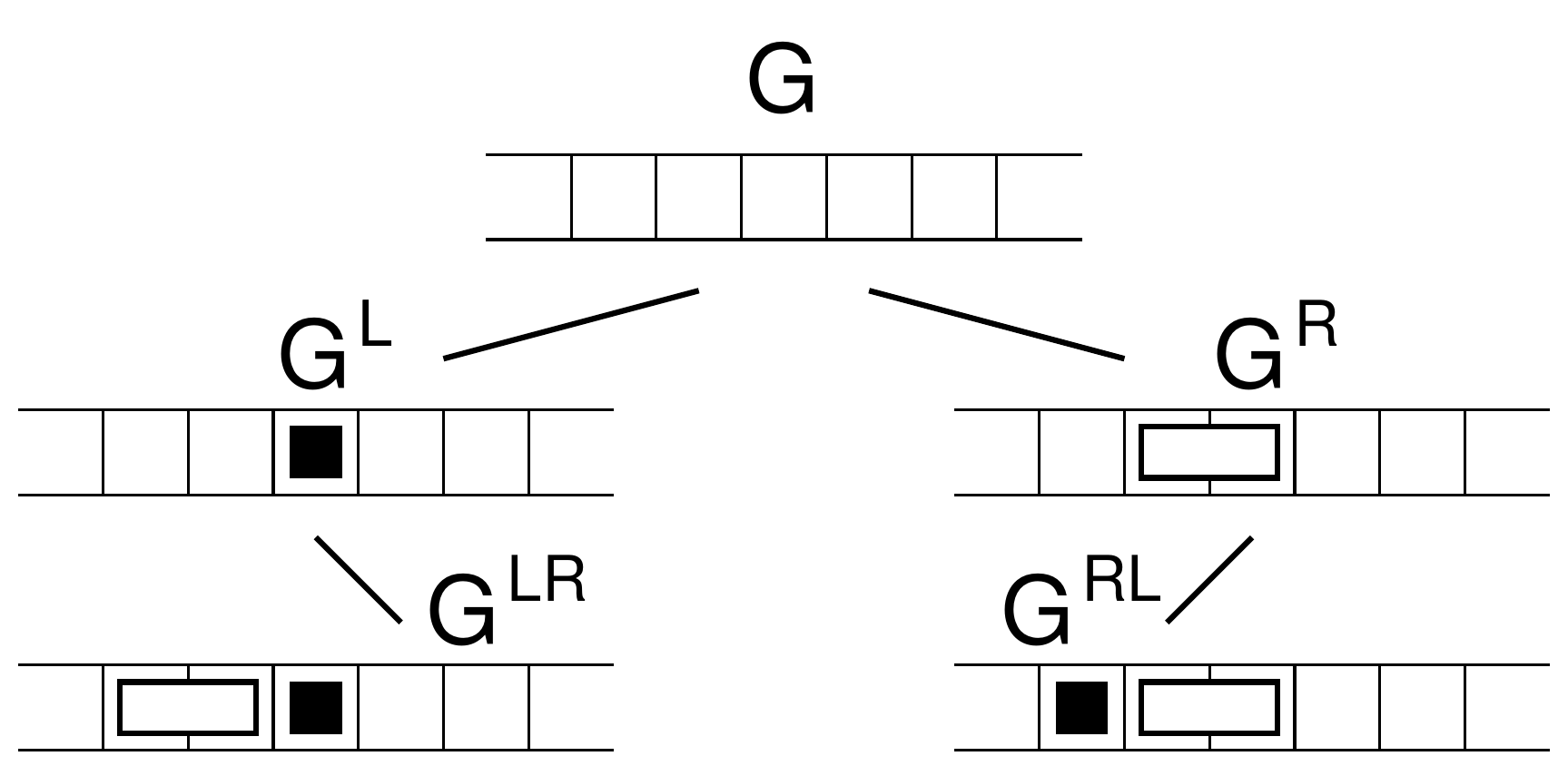}
\caption{Although the pieces for $G^L$ and $G^R$ overlap, Left  still has  an  option of $G^R$ that is equal to a right option of $G^L$.}
\label{case1}\end{center}
\end{figure}

If Left cannot so easily obtain such a position --- if there is not a free square on both sides of Right's domino --- then we have several cases to consider.


Case 1: The domino placed by Right to move from $G$ to $G^R$ is at the end of a component $S_n$, $n\geq 4$.  So $G=S_n+G'$ and $G^R=S_{n-2}+G'$. Since this domino interferes with  Left's move from $G$ to $G^L$, Left's move to $G^L$ must be  to place a square at the end or one away from the end of $S_n$:  $G^L =S_{n-1}+G'$ or $G^L= S_1+S_{n-2}+G'$.  If the former, then as above Left responds to $G^R$ by playing adjacent to Right, obtaining $G^{RL}=S_{n-3}+G'$, which is in $\P^-$ because it is a Right option of $G^L =S_{n-1}+G'$.  If the latter, then Left responds to $G^R$ by playing one away from the end of $S_{n-2}$, leaving the position $S_1+S_{n-4}+G'$, which is a Right option of $G^L=S_1+S_{n-2}+G'$, and is therefore in $\P^-$.
 
Case 2: The domino placed by Right to move from $G$ to $G^R$ is at the end of a component $S_3$. So $G=S_3+G'$ and $G^R=S_1+G'$.  If Left's move to $G^L$ is to play at the end of this $S_3$, then she simply plays in the $S_1$ now to obtain $G^{RL}=G'$, which is equal to a $G^{LR}$ and so is in $\P^-$.  If Left's move to $G^L$ is to play in the middle of $S_3$, then $G^L=S_1+S_1+G'$.  It cannot be that $G'$ is a sum of all $S_1$ positions, else $G^L+S_1$
 is right-win, and we are assuming it is previous-win.  So there is at least one component $S_n$ in $G'$ with $n\geq 2$.  Thus we can write $G'=S_n+G''$.  Left should respond to $G^R = S_1+S_n +G''$ by moving one away from the end of the $S_n$, to obtain $G^{RL} = S_1+S_1+S_{n-2}+G''$, which is in $\P^-$ because it is a right option of $G^L=S_1+S_1+G'=S_1+S_1+S_n+G''$. 

Case 3: The domino  placed by Right to move from $G$ to $G^R$ is in an $S_2$.  So $G=S_2+G'$ and $G^R=G'$.  Since this domino interferes with Left's move to $G^L$, we must have $G^L=S_1+G'$.  Again, it cannot be that $G'$ is a sum of all $S_1$ positions, since $G^L+S_1\in\P^-$.   So there is at least one component $S_n$ in $G'$ with $n\geq 2$. Left should respond to $G^R=G'=S_n+G''$ by playing one away from the end of $S_n$, so that she obtains the position $S_1+S_{n-2}+G''$, which is a Right option from $G^L=S_1+G'=S_1+S_n+G''$. 

In every case Left has a good second move in $G$ ($G^{RL}\in \P^-$) and so $G\in \P^-$, as required.
\end{proof}

As corollaries of this lemma we obtain both a general strategy for Left in {\sc partizan kayles} as well as the inequality $S_2 \gneqq 2S_2$.
  
\begin{cor} \label{leftprefers}
For any position $G\in \mathcal{K}$, if Left can win $G+S_1$ then Left can win by moving to $G$.
\end{cor}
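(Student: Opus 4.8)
The plan is to translate the corollary into a pure statement about outcomes and then feed it Lemma \ref{Kstratlemma}. First I would unwind the hypothesis: ``Left can win $G+S_1$'' means Left wins moving first, and since Lemma \ref{noleft} forbids any position of $\mathcal{K}$ from lying in $\L^-$, this is equivalent to $G+S_1\in\N^-$. The move whose optimality we must certify is Left's removal of the single pin, which sends $G+S_1$ to $G$; again using that there are no left-win positions, this move wins precisely when $G\in\P^-$ (Right then moves first in $G$ and loses). So the corollary reduces to the single implication $G+S_1\in\N^-\Rightarrow G\in\P^-$.

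Next I would examine Left's winning first move in $G+S_1$. Either it is played in the $S_1$ summand, in which case the resulting position is exactly $G$ and, being a winning move, already exhibits $G\in\P^-$; or it is played inside $G$, reaching $G^L+S_1$ for some $G^L\in G^{\bs{L}}$. In the latter case the move is winning, so it leaves Right to move and lose, giving $G^L+S_1\in\P^-$.

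The remaining case is where Lemma \ref{Kstratlemma} does the work: it gives $G\geqq G^L+S_1$ (mod $\mathcal{K}$), and evaluating this modular inequality at $X=0$ yields $o^-(G)\geq o^-(G^L+S_1)=\P$. I expect the one genuinely delicate step to be the outcome-order deduction that closes this off: since $\N$ is incomparable to $\P$ and $\R<\P$ in the outcome partial order, the bound $o^-(G)\geq\P$ already rules out $\N^-$ and $\R^-$, while Lemma \ref{noleft} rules out $\L^-$, leaving $o^-(G)=\P$. Hence $G\in\P^-$ in both cases and the move $G+S_1\to G$ wins. Everything besides this lattice argument is bookkeeping about where Left's first move lands, with no real computation required.
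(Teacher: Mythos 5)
Your proof is correct and takes essentially the same route as the paper: the paper's one-line argument likewise notes that every alternative Left option of $G+S_1$ has the form $G^L+S_1$ and is dominated by the move to $G$ via Lemma \ref{Kstratlemma}. Your explicit unwinding---specializing the modular inequality at $X=0\in\mathcal{K}$ and combining Lemma \ref{noleft} with the outcome order to force $G\in\P^-$---is precisely the reasoning the paper compresses into the word ``dominated.''
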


\begin{proof}
Any other option of $G+S_1$ is of the form $G^L+S_1$, and by Lemma \ref{Kstratlemma}, $G^L+S_1$ is dominated by $G$.
\end{proof}

\begin{cor}\label{211}
$S_2 \gneqq S_1+S_1$ (mod $\mathcal{K}$).
\end{cor}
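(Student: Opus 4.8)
The plan is to split the claim into its two halves: the non-strict inequality $S_2 \geqq S_1 + S_1$, and the strictness, i.e.\ that the two positions are \emph{distinguishable} modulo $\mathcal{K}$.

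For the non-strict part I would simply invoke Lemma~\ref{Kstratlemma}. Since $S_2 = \{S_1 \v 0\}$, its unique Left option is $G^L = S_1$. Applying the lemma with $G = S_2$ therefore gives immediately $S_2 \geqq S_2^L + S_1 = S_1 + S_1$ (mod $\mathcal{K}$). This is the entire content of the first half; no further work is needed, since $S_2$ is its own arbitrary game in $\mathcal{K}$.

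For strictness I must exhibit a game $X \in \mathcal{K}$ with $o^-(S_2 + X) \neq o^-(2S_1 + X)$. The simplest candidate is $X = 0 = S_0 \in \mathcal{K}$, and I would verify the two outcomes by hand. In $2S_1$ Right can never move, as there is no adjacent pair of squares; hence Right, being the player with no move, wins under \mise play whether he moves first or second, and Left playing first is forced to $S_1$ where again Right cannot move and wins. So $2S_1 \in \R^-$. In $S_2$, by contrast, Left moving first is forced to $S_1$, after which Right cannot move and so wins, while Right moving first is forced to $0$, after which Left cannot move and so wins; thus the second player wins and $S_2 \in \P^-$. Since $\P > \R$ in the outcome partial order, we get $o^-(S_2) > o^-(2S_1)$, so $X = 0$ distinguishes the two positions.

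Combining the two halves yields $S_2 \gneqq S_1 + S_1$ (mod $\mathcal{K}$). The only real care needed is in the outcome computation of the strictness step: one must keep the \mise convention straight (the player unable to move wins) and confirm that $2S_1$ is genuinely right-win rather than previous-win, which turns precisely on the fact that Right has no move at all in a sum of single squares. Everything else is a direct citation of Lemma~\ref{Kstratlemma}, so I expect no serious obstacle.
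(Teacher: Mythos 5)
Your proposal is correct and matches the paper's proof essentially verbatim: the non-strict inequality via Lemma~\ref{Kstratlemma} with $G=S_2$ and unique Left option $S_1$, and strictness from $S_2\in\P^-$ versus $2S_1\in\R^-$ (the paper simply asserts these outcomes, which your distinguishing element $X=0$ argument makes explicit). Your hand verification of both outcomes under the mis\`ere convention is accurate, so there is nothing to fix.
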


\begin{proof} $S_2 \geqq 2S_1$  follows directly from Lemma \ref{Kstratlemma} with $G=S_2$, since the only left option $G^L$ is $S_1$.  The inequality is strict because $S_2 \in \P^-$ while $2S_1 \in \R^-$.
\end{proof}

\section{Reduction}
Corollary \ref{211} is the key to the solution of {\sc partizan kayles}: it allows us, by establishing domination of options, to show that every strip $S_n$  `splits' into a sum of $S_1$ positions (single squares) and $S_2$ positions (dominoes). 
 Theorem \ref{Kreduce} demonstrates the reduction.  
 Let us work through a few reductions by hand to gain some insight into this process.  These reductions are illustrated in Figure \ref{kaylesreduce}.

Trivially, $S_1$ and $S_2$ are already sums of single squares and dominoes.  In a strip of length $3$, Left has options to $S_2$ (playing at either end) and $S_1+S_1$ (playing in the middle).  Right has only one option, to $S_1$.  These are precisely the options of $S_1+S_2$; both games are equal to $\{S_2,2S_1\v S_1\}$ (which, in canonical form, is the game $\{S_2\v S_1\}$, by Corollary \ref{211}).  Thus, $S_3 = S_1+S_2$.

In a strip of length $4$, Left's options are to $S_3$ or $S_1+S_2$; as just established, these are equivalent.  Right's options are to $S_2$ or $S_1+S_1$, and the second  dominates  the first by Corollary \ref{211}.  So $S_4 \equiv \{S_1+S_2 \v 2S_1\}$.  Compare this to the position $2S_1+S_2  = \{S_1+S_2, 3S_1 \v 2S_1\}$; they are equivalent because the first left option dominates the second.  Thus, $S_4 \equiv 2S_1+S_2$.  

Lastly, consider a strip of length $5$. Left's options are $S_4\equiv 2S_1+S_2$, $S_1+S_3\equiv 2S_1+S_2$, and $S_2+S_2$, which dominates the others.  Right's options are $S_3$ and $S_1+S_2$, which are equivalent.  So $S_5 \equiv \{2S_2 \v S_1+S_2\}$.  This is the same as the position $S_1+2S_2$, as Left's move to $2S_2$ dominates here and Right's only move is to $S_1+S_2$.  That is, $S_5 \equiv S_1+2S_2$.  

\begin{figure}[htp]
\begin{center}
\includegraphics[scale=0.45]{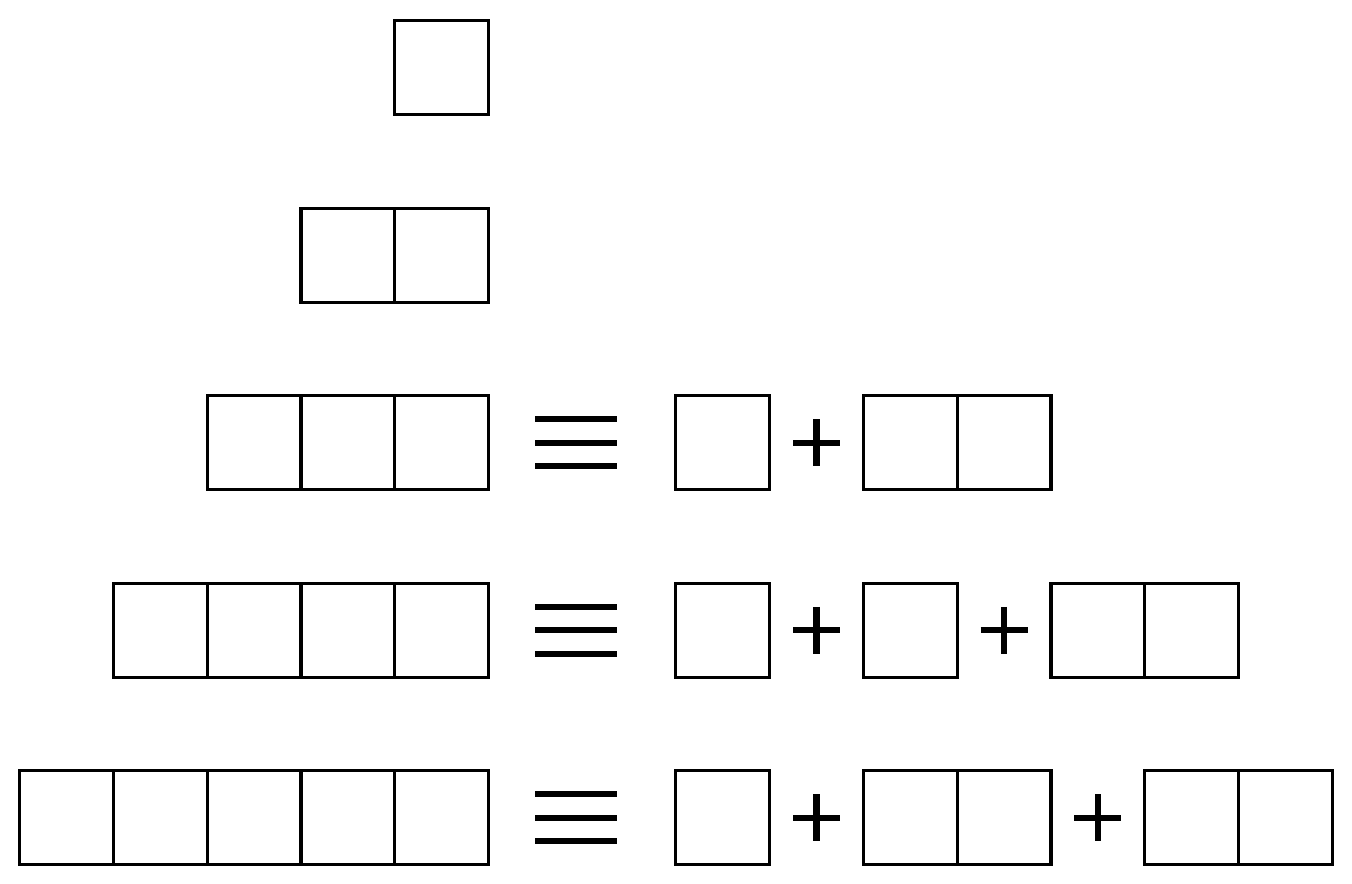}
\caption{Reduction of $S_n$ into a sum of $S_1$ and $S_2$ positions, for $n=1,\ldots,5$.}
\label{kaylesreduce}\end{center}
\end{figure}

If we were to continue with $S_6$, $S_7$, and $S_8$, we would observe a pattern based on the congruency of $n$ modulo $3$.  
The reductions for longer strips use the same logic as the cases for $n=1,\ldots, 5$, and indeed the general inductive proof follows a similar method, of considering the possible options and removing those dominated via Corollary \ref{211}.  We now begin the general argument for reducing any $S_n$.  Lemma \ref{reducelemma} serves to tidy up the proof of Theorem \ref{Kreduce}.

\begin{lem}\label{reducelemma}
If $k,j \in \mathbb{N}$  then $kS_1+jS_2 \equiv \{(k-1)S_2 +jS_2 \v kS_1 + (j-1)S_2\}$ (mod $\mathcal{K})$.
\end{lem}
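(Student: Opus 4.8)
The plan is to identify all options of $kS_1+jS_2$, reduce the set of Left options to a single dominant one using Corollary \ref{211}, and then read off the claimed game by deleting the dominated option. First I would enumerate the moves. Right can only place a domino, and the sole place to do so is inside one of the $j$ strips $S_2$; every such move leads to the same position $kS_1+(j-1)S_2$, so Right has (up to equality) the single option named on the right-hand side. Left places a single square and has exactly two distinct options: clearing one of the $k$ single squares, reaching $(k-1)S_1+jS_2$, or playing inside one of the strips $S_2$ (turning it into an $S_1$), reaching $(k+1)S_1+(j-1)S_2$. Thus everything reduces to showing that the first of these Left options dominates the second modulo $\mathcal{K}$; throughout I assume $k,j\geq 1$ so that both displayed options are defined.

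To establish the domination I would start from Corollary \ref{211}, which gives $S_2\geqq 2S_1$ (mod $\mathcal{K}$), and add the common game $(k-1)S_1+(j-1)S_2$ to both sides. Modular inequality is preserved under disjunctive sum: if $A\geqq B$ (mod $\mathcal{K}$) and $C\in\mathcal{K}$, then for every $X\in\mathcal{K}$ we have $C+X\in\mathcal{K}$, so $o^-(A+C+X)\geq o^-(B+C+X)$ and hence $A+C\geqq B+C$ (mod $\mathcal{K}$). Applying this with $C=(k-1)S_1+(j-1)S_2$ turns $S_2\geqq 2S_1$ into $(k-1)S_1+jS_2\geqq (k+1)S_1+(j-1)S_2$ (mod $\mathcal{K}$), which is exactly the required domination. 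Alternatively, this inequality is immediate from Lemma \ref{Kstratlemma} applied to $G=(k-1)S_1+jS_2$ with the Left option $G^L=kS_1+(j-1)S_2$, since then $G^L+S_1=(k+1)S_1+(j-1)S_2$.

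Finally I would delete the dominated Left option. Removing $(k+1)S_1+(j-1)S_2$ from the Left options of $kS_1+jS_2$ leaves precisely the game $\{(k-1)S_1+jS_2 \v kS_1+(j-1)S_2\}$, so the stated identity follows once we know that deleting a Left option dominated modulo $\mathcal{K}$ does not change the equivalence class modulo $\mathcal{K}$. This last justification is the step I expect to be the main obstacle, since \mise reductions are genuinely subtler than their normal-play counterparts and domination-removal must be argued relative to the universe. I would support it either by citing the standard restricted-\mise reduction theory or by giving the short two-sided outcome argument directly: discarding a Left option can only weaken Left, which gives one inequality of outcomes for every test position $X\in\mathcal{K}$, while the domination $(k-1)S_1+jS_2\geqq (k+1)S_1+(j-1)S_2$ lets Left simulate any beneficial use of the discarded option by the surviving one, giving the reverse inequality and hence equality of outcomes in every context.
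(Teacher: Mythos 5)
Your proof takes essentially the same route as the paper's: you enumerate the same two distinct Left options and the single Right option (up to equality), and you eliminate $(k+1)S_1+(j-1)S_2$ via the same domination $S_2\geqq 2S_1$ from Corollary \ref{211}. The only difference is that you make explicit two steps the paper leaves implicit---translation-invariance of inequality modulo $\mathcal{K}$ under adding a position of $\mathcal{K}$ (or, alternatively, your direct appeal to Lemma \ref{Kstratlemma}) and the fact that deleting a Left option dominated modulo the universe preserves equivalence modulo that universe---and your justifications of both are correct.
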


\begin{proof}
Left's only moves in $kS_1+jS_2$ are to bring an $S_1$ to zero or an $S_2$ to an $S_1$.  These moves give the options $(k-1)S_1+jS_2$ and $(k+1)S_1+(j-1)S_2$, respectively, and the second is dominated by the first because $S_2 \geqq 2S_1$.
Right has only one move up to symmetry --- play in an $S_2$ --- and so $kS_1+jS_2 \equiv \{(k-1)S_2 +jS_2 \v kS_1 + (j-1)S_2\}$ (mod $\mathcal{K})$, as claimed.
\end{proof}

\begin{thm}\label{Kreduce}
If $n\geq 3$ then
$$
S_n \equiv
\begin{cases}
kS_1+kS_2 \text{ (mod $\mathcal{K}$)}, & \text{if } n=3k,\\
(k+1)S_1+kS_2 \text{ (mod $\mathcal{K}$)}, & \text{if } n=3k+1,\\
kS_1+(k+1)S_2 \text{ (mod $\mathcal{K}$)}, & \text{if } n=3k+2.\\
\end{cases}
$$
\end{thm}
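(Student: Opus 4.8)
The plan is to induct on $n$, using the cases $n=3,4,5$ worked out by hand immediately before the statement as the base of a strong induction. For the inductive step I would fix $n\geq 6$ and assume the claimed reduction holds for every strip of length between $3$ and $n-1$. The first task is to write down all options of $S_n$ explicitly: a Left move places a single square and splits the strip, so every Left option has the form $S_a+S_b$ with $a+b=n-1$ and $a,b\geq 0$; a Right move places a domino, so every Right option has the form $S_a+S_b$ with $a+b=n-2$. Since $a,b<n$, the inductive hypothesis (together with $S_0=0$, $S_1$, and $S_2$) lets me replace each option by its reduced form $\alpha S_1+\beta S_2$, the counts being additive across the disjunctive sum.

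Next I would exploit Corollary \ref{211}. All of Left's options carry the same total number of squares, namely $n-1$, so after reduction any two of them, say $\alpha S_1+\beta S_2$ and $\alpha' S_1+\beta' S_2$, satisfy $\alpha+2\beta=\alpha'+2\beta'$; hence they differ only by repeatedly trading $2S_1$ for a single $S_2$. Because $S_2\gneqq 2S_1$ (mod $\mathcal{K}$) and indistinguishability is a congruence, Left's options are therefore \emph{totally ordered} modulo $\mathcal{K}$, with the option having the most $S_2$ summands (equivalently, the fewest $S_1$ summands) dominating all the rest. The identical reasoning applies to Right's options, all of which have $n-2$ squares; but Right prefers the $\leqq$-least position, so Right's dominating option is the one with the most $S_1$ summands. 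I would then claim that the Left option $S_2+S_{n-3}$ realizes the maximum number of dominoes while the Right option $S_1+S_{n-3}$ realizes the maximum number of single squares. Applying the inductive hypothesis to $S_{n-3}$ (whose residue modulo $3$ equals that of $n$) immediately yields the reduced forms of these two options, and in each of the three congruence classes they come out to be exactly the $(k-1)S_1+kS_2$-type and $kS_1+(k-1)S_2$-type expressions that Lemma \ref{reducelemma} produces as the Left and Right options of the claimed reduced form of $S_n$.

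Having thus isolated a single surviving Left option and a single surviving Right option, I would conclude $S_n\equiv\{\,S_2+S_{n-3}\mid S_1+S_{n-3}\,\}$ (mod $\mathcal{K}$), rewrite both entries via the inductive hypothesis, and invoke Lemma \ref{reducelemma} to recognise the result as $kS_1+kS_2$, $(k+1)S_1+kS_2$, or $kS_1+(k+1)S_2$ according to $n\bmod 3$. The main obstacle is the extremality claim: verifying that $S_2+S_{n-3}$ and $S_1+S_{n-3}$ genuinely minimize, respectively maximize, the number of single squares over all admissible splits. This is a finite but fiddly bookkeeping step — it amounts to checking, over the residues of $a$ and $b$ modulo $3$, that no other split $a+b=n-1$ (resp.\ $n-2$) reduces to fewer (resp.\ more) copies of $S_1$ — and it is precisely where the congruence-class structure of the statement enters. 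Everything else is routine once the options have been reduced and Corollary \ref{211} has ordered them.
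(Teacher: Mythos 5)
Your proposal is correct and takes essentially the same route as the paper's proof: strong induction on $n$, reduction of every split option $S_a+S_b$ via the inductive hypothesis, elimination of dominated options by Corollary \ref{211}, and recognition of the surviving two-option form through Lemma \ref{reducelemma}. Your square-conservation observation (every Left option reduces to $\alpha S_1+\beta S_2$ with $\alpha+2\beta=n-1$, so the options are totally ordered by trading $2S_1$ for $S_2$) is a clean compression of the paper's explicit residue-by-residue enumeration of $S_i+S_{n-1-i}$ and $S_i+S_{n-2-i}$, and the extremality bookkeeping you defer does check out in all three congruence classes, with $S_2+S_{n-3}$ and $S_1+S_{n-3}$ reducing exactly to the paper's dominating options.
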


\begin{proof}  
By the lemma,  
 it suffices to show that $S_n \equiv \{(k-1)S_1+kS_2 \v kS_1 + (k-1)S_2\}$ when $n=3k$, that $S_n\equiv \{kS_1+kS_2 \v \{(k+1)S_1+(k-1)S_2\}$ when $n=3k+1$, 
and that $S_n \equiv \{(k-1)S_1+(k+1)S_2 \v kS_1 + kS_2 \}$ when $n=3k+2$.  The proof is broken into these three cases.
Note that any left option of $S_n$ is of the form $S_i+S_{n-1-i}$, with $0\leq i \leq n-1$.  Similarly, any right option of $S_n$ is of the form $S_i+S_{n-2-i}$, with $0 \leq i \leq n-2$.
\\ \\
\noindent {\em Case 1:} $n=3k$:

\noindent If $i=3j$ then $n-1-i=3k-3j-1=3(k-j-1)+2$, and $n-2-i=3k-3j-2 = 3(k-j-1)+1$.  By induction this gives left and right options
\begin{align*}
S_i+S_{n-1-i} &= jS_1+jS_2+(k-j-1)S_1+(k-j)S_2 \\
&= (k-1)S_1+kS_2= G^{L_1}; and\\
S_i+S_{n-2-i} &= jS_1+jS_2+(k-j)S_1+(k-j-1)S_2 \\
&= kS_1+(k-1)S_2=G^{R_1}.\\
\end{align*}
If $i=3j+1$ then $n-1-i=3k-3j-2=3(k-j-1)+1$ and $n-2-i=3k-3j-3 =3(k-j-1)$.  By induction,
\begin{align*}
S_i+S_{n-1-i} &= (j+1)S_1+jS_2+(k-j)S_1+(k-j-1)S_2 \\
&= (k+1)S_1+(k-1)S_2= G^{L_2};\\
S_i+S_{n-2-i} &= (j+1)S_1+jS_2+(k-j-1)S_1+(k-j-1)S_2 \\
&= kS_1+(k-1)S_2= G^{R_2}.\\
\end{align*}
 If $i=3j+2$ then $n-1-i=3k-3j-3=3(k-j-1)$ and $n-2-i= 3k-3j-4 =3(k-j-2)+2$, so by induction we have 
\begin{align*}
S_i+S_{n-1-i} &= jS_1+(j+1)S_2+(k-j-1)S_1+(k-j-1)S_2 \\
&= (k-1)S_1+kS_2=G^{L_3};\\
S_i+S_{n-1-i} &= jS_1+(j+1)S_2+(k-j-2)S_1+(k-j-1)S_2 \\
&= (k-2)S_1+kS_2 =G^{R_3}.\\
\end{align*}

 Left has only two distinct options: either $G^{L_1}=G^{L_3}=(k-1)S_1+kS_2$ (obtained by moving to $S_i+S_{n-1-i}$ with any $i\equiv 0,2$ (mod 3)), or $G^{L_2}=(k+1)S_1+(k-1)S_2$ (obtained by moving to $S_i+S_{n-1-i}$ with any $i \equiv 1$ (mod 3)).  By Corollary \ref{211}, $G^{L_2}$ is dominated by $G^{L_1}$.
Similarly, Right's options are  $G^{R_1}=G^{R_2}=kS_1+(k-1)S_2$ or $G^{R_3}=(k-2)S_1+kS_2$, and the latter is dominated by the former.  We conclude that, when $n=3k$,
$$S_n \equiv  \{ (k-1)S_1+kS_2  \v kS_1+(k-1)S_2\}  \equiv kS_1+kS_2.$$

\noindent {\em Case 2:} $n=3k+1$:
\\
In this case, by similar arguments and computations, we find
Left's only move is to $kS_1+kS_2$, while Right has an option to $(k-1)S_1+kS_2$  dominated by an option to $(k+1)S_1+(k-1)S_2$. Thus, if $n=3k+1$ then
$$S_n \equiv  \{ kS_1+kS_2  \v (k+1)S_1+(k-1)S_2\}  \equiv (k+1)S_1+kS_2.$$

\noindent {\em Case 3:} $n=3k+2$:
 \\
In this case,
Left has a move to $(k+1)S_1+kS_2$ that is dominated by a move to $(k-1)S_1+(k+1)S_2$, while Right's only option is $kS_1+kS_2$. Thus, if $n=3k+2$ then
$$S_n \equiv  \{ (k-1)S_1+(k+1)S_2  \v kS_1+kS_2\}  \equiv kS_1+(k+1)S_2.$$
\end{proof}

\section{Outcome and strategy}
We have shown that every strip $S_n$ splits into a sum of single squares and dominoes.  This makes  analysis of the {\sc partizan kayles} universe much more manageable; we need only determine the outcome of a sum of any number of single squares and dominoes.   One trivial observation is that if there are more single squares than dominoes, then Left will not be able to win, as Right can eliminate all of `his' pieces before Left can run out of single squares.  That is, if $k>j$ then $o^-(kS_1+jS_2) = \R^-$.  Another immediate result is the outcome when there are exactly as many single squares as dominoes: the players are forced\footnote{The players are `forced' under optimal play, because Left will always choose to play in an $S_1$ over an $S_2$, by Corollary \ref{leftprefers}.} into a Tweedledum-Tweedledee situation where the first player runs out of moves first.  Thus, if $k=j$ then $o^-(kS_1+jS_2)=o^-(kS_1+kS_2)=\N^-$.  The outcome in the remaining case, when $k<j$, turns out to be dependant on the congruence of the total number of (not necessarily single) squares, modulo $3$; that is, it depends on the value of $k+2j$ (mod $3$).

\begin{thm} \label{12outcome}
For positive integers $k$ and $j$,
$$
o^-(kS_1+jS_2) =
\begin{cases}
\N^-, & \text{if } k=j, \text{or if } k<j \text{ and } k+2j\equiv 0 \text{(mod $3$)},\\
\R^-, & \text{if } k>j, \text{or if } k<j \text{ and } k+2j\equiv 1 \text{(mod $3$)},\\
\P^-, & \text{if } k<j \text{ and } k+2j\equiv 2 \text{(mod $3$)}.
\end{cases}
$$
\end{thm}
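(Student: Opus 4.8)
The plan is to prove the outcome formula by analyzing the disjunctive sum $kS_1 + jS_2$ directly, using the reduced form of positions guaranteed by Theorem \ref{Kreduce} together with the domination result of Corollary \ref{211} and the no-left-win result of Lemma \ref{noleft}. The three cases $k>j$, $k=j$, and $k<j$ are already handled informally in the surrounding text for the first two, so the real content is the subcase $k<j$, where the answer depends on $k+2j \pmod 3$. Since Lemma \ref{noleft} tells us $o^-(G) \neq \L$ for all $G \in \mathcal{K}$, it suffices throughout to determine who wins moving first and who wins moving second, and to translate: Left wins moving first means $\N^-$, Left wins moving second means $\P^-$, and Left loses both means $\R^-$. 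So the task reduces to exhibiting optimal strategies.

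First I would dispatch the easy parts. For $k > j$ and $k = j$ the text already gives the counting arguments (Right eliminates his dominoes before Left runs out of squares, respectively a Tweedledum-Tweedledee mirroring forcing the first player to run out), so I would simply record these with a sentence each. The heart of the proof is the assumption $k < j$. Here I would set up an induction on the total number of squares $k + 2j$, using Corollary \ref{leftprefers} to assert that whenever Left moves she may as well play in an $S_1$ if one is present (reducing $k$ to $k-1$), and noting that Left playing in an $S_2$ yields $(k+1)S_1 + (j-1)S_2$, which is dominated. Right's only move is in an $S_2$, sending $kS_1 + jS_2$ to $kS_1 + (j-1)S_2$, changing $k+2j$ by $-2 \equiv 1 \pmod 3$.

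The key computation is to track the residue $r = k + 2j \pmod 3$ under optimal play and match it to the three outcomes. When $r \equiv 2$, I want to show $G \in \P^-$: the second player wins. I would verify that whichever player is to move, the opponent can steer the residue so that the mover is always the one who eventually faces an empty position, invoking the inductive hypothesis on the smaller positions reached after one move by each player. When $r \equiv 0$, I aim for $\N^-$ by exhibiting a good first move for Left (typically playing an $S_1$ to reach a position with residue $\equiv 2$, i.e.\ a $\P^-$ position by induction, so Left moving first wins). When $r \equiv 1$, I aim for $\R^-$ by checking that every Left first move lands in an $\N^-$ or $\R^-$ position (so Left cannot win moving first) and that Left also loses moving second.

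The main obstacle I anticipate is bookkeeping at the boundary of the induction and ensuring the inequality $k < j$ is preserved (or handled when it fails) as the parameters decrease: a single Right move $j \mapsto j-1$ can collapse the strict inequality into $k = j$, at which point the outcome jumps to $\N^-$ via the Tweedledum-Tweedledee case rather than continuing the residue pattern, and likewise a Left move $k \mapsto k-1$ can reach $k = j - 1$ or smaller. So the careful part is to confirm that these transitions between the $k<j$ regime and the $k=j$ regime are consistent with the claimed residue-based outcomes, i.e.\ that the base cases and the regime-crossing moves all point the induction the right way. I would organize this as a clean case analysis on $r \in \{0,1,2\}$, within each case checking Left's best first move and Left's best response to each Right first move, appealing to Corollary \ref{leftprefers} to restrict attention to a single representative move type and to the inductive hypothesis for the outcomes of the resulting smaller sums.
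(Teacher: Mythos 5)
Your proposal is correct and takes essentially the same approach as the paper: its Lemma \ref{reducelemma} is precisely your domination-based pruning (via Corollary \ref{211} and Corollary \ref{leftprefers}) of $kS_1+jS_2$ to the single options $(k-1)S_1+jS_2$ and $kS_1+(j-1)S_2$, after which the paper runs exactly your induction with the case analysis on $k+2j \pmod{3}$, merely folding $k=j$ and $k>j$ into the same induction rather than citing the informal counting arguments. Your anticipated regime-crossing obstacle is in fact vacuous: a Right move can turn $k<j$ into $k=j$ only from residue $2$, landing at residue $0$, where the $k=j$ clause and the $k<j$ residue-$0$ clause both give $\N^-$, so the residue pattern is never disrupted.
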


\begin{proof}
Lemma \ref{reducelemma} states that 
$$kS_1+jS_2 \equiv \{(k-1)S_1+jS_2 \v kS_1+(j-1)S_2\}.$$
Let $G=kS_1+jS_2$.  We can prove each case by applying induction to $G^L=(k-1)S_1+jS_2$ and $G^R= kS_1+(j-1)S_2$.

If $k=j$ then Left's option is in $\P^-$ since $(k-1)+2k=3k-1$, and Right's option is in $\R^-$ since $k>k-1$.  So $G\in \N^-$.

If $k>j$ then $G^L \in \N^- \cup R^-$ and $G^R\in \R^-$, so  $G\in \R^-$.

If $k<j$ and $k+2j\equiv 0$ (mod 3) then $G^L \in \P^-$ because $k-1+2j \equiv 2$ (mod 3), while $G^R \in \R^-$ because $k+2j-2\equiv 1$ (mod 3). Thus $G\in \N^-$.

If $k<j$ and $k+2j\equiv 1$ (mod 3) then $G^L \in \N^-$ because $k-1+2j\equiv 0$ (mod 3), and $G^R \in \P^-$ because $k+2j-2\equiv 2$ (mod 3).  This confirms $G \in \R^-$.

Finally, if $k<j$ and $k+2j \equiv 2$ (mod 3) then $G^L \in \R^-$ because $k-1+2j\equiv 1$ (mod 3), and $G^R \in \N^-$ because $k+2j-2 \equiv 0$ (mod 3).  Thus $G \in \P^-$.
\end{proof}

As an immediate corollary we can prove what might be intuitively guessed about this universe: a single square and a single domino `cancel each other out'.  Essentially, we can think of a single square as one move for Left and a single domino as one move for Right.  
Things are more complicated  when only dominoes are present, because Left must then play in a domino, but this way of thinking works when at least one of each exists.  Corollary \ref{zerocor} has a very nice obvious consequence, which is given as the following corollary: any strip of length a multiple of $3$ is equivalent to zero.

\begin{cor} \label{zerocor} $S_1+S_2\equiv 0$ (mod $\mathcal{K}$). \end{cor}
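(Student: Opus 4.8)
The plan is to use the definition of equivalence directly: to establish $S_1 + S_2 \equiv 0$ (mod $\mathcal{K}$), I must show $o^-(S_1+S_2+X) = o^-(X)$ for every $X \in \mathcal{K}$. Since indistinguishability is a congruence and, by Theorem \ref{Kreduce}, every position of $\mathcal{K}$ is equivalent to a sum $kS_1 + jS_2$ with $k,j \geq 0$, it suffices to verify the single family of identities
$$
o^-\big((k+1)S_1 + (j+1)S_2\big) = o^-\big(kS_1 + jS_2\big) \quad \text{for all } k,j \geq 0.
$$

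The heart of the argument is the observation that adjoining one $S_1$ and one $S_2$ preserves the two pieces of data on which Theorem \ref{12outcome} depends. First, $\operatorname{sgn}\big((k+1)-(j+1)\big) = \operatorname{sgn}(k-j)$, so the square-count versus domino-count trichotomy ($k=j$, $k>j$, $k<j$) is unchanged. Second, $(k+1)+2(j+1) = k+2j+3 \equiv k+2j$ (mod $3$), so the residue governing the $k<j$ subcases is also unchanged. Consequently, whenever both sides have strictly positive coefficients — that is, when $k,j \geq 1$ — Theorem \ref{12outcome} applies verbatim to each side and returns the same outcome, settling the generic case.

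The remaining work, and the only place requiring care, is the degenerate cases in which $kS_1+jS_2$ has a zero coefficient, since Theorem \ref{12outcome} is stated only for positive $k,j$; note that the left-hand side $(k+1)S_1+(j+1)S_2$ always has positive coefficients, so only the right-hand side can be degenerate. When $k=j=0$ the right side is the zero game with $o^-(0)=\N^-$, matching $o^-(S_1+S_2)=\N^-$ from the $k=j$ line of Theorem \ref{12outcome}. When $j=0$ and $k\geq 1$, the right side is a pure sum of squares $kS_1$, which lies in $\R^-$ (Left, being the only mover, is forced to make the last move), matching the $k+1>1$ instance of the theorem on the left. The one genuinely new computation is $o^-(jS_2)$ for $j\geq 1$: here Left's only move is to $S_1+(j-1)S_2$ and Right's only move is to $(j-1)S_2$, so a short induction of exactly the shape of the proof of Theorem \ref{12outcome} shows that $o^-(jS_2)$ depends only on $2j$ (mod $3$), which agrees with the value the theorem predicts for $(k+1)S_1+(j+1)S_2$ when $k=0$.

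Assembling the generic case with these three boundary checks completes the proof. I expect the main obstacle to be purely the bookkeeping at the zero-coefficient boundary — in particular isolating the pure-domino subcase $jS_2$ that falls outside the positivity hypothesis — rather than any real difficulty, since the core invariance of both the sign of $k-j$ and the residue $k+2j$ modulo $3$ is immediate.
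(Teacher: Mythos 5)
Your proposal is correct and takes essentially the same route as the paper: the paper likewise reduces an arbitrary $X\in\mathcal{K}$ to $kS_1+jS_2$ and observes that the trichotomy among $k=j$, $k>j$, $k<j$ and the residue $k+2j \pmod 3$ are unchanged under $(k,j)\mapsto(k+1,j+1)$, so Theorem \ref{12outcome} gives equal outcomes. Your extra handling of the zero-coefficient boundary ($k=0$ or $j=0$, where Theorem \ref{12outcome} as stated for positive $k,j$ does not literally apply) is a careful touch the paper silently skips, but it is a refinement of the same argument rather than a different approach.
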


\begin{proof}
Let $X\equiv kS_1+jS_2$ be any Kayles sum.  Then $o^-(X+S_1+S_2)=o^-[(k+1)S_1+(j+1)S_2] = o^-(kS_1+jS_2)$, by Theorem \ref{12outcome}, since
$$k=j \Leftrightarrow k+1=j+1,$$
$$k>j \Leftrightarrow k+1>j+1, \textrm{ and}$$
$$k+2j\equiv (k+1)+2(j+1) (\textrm{mod } 3).$$
\end{proof}

\begin{cor} If $n\equiv 0$ (mod $3$) then $S_n \equiv 0$ (mod $\mathcal{K}$).\end{cor}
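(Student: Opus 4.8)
The plan is to combine the reduction theorem with the cancellation corollary, so that the result becomes almost immediate. First I would dispose of the trivial case $n=0$: since $S_0 = \{\cdot \v \cdot\} = 0$, we already have $S_0 \equiv 0$ (mod $\mathcal{K}$). For $n \geq 3$ with $n \equiv 0 \pmod 3$, write $n = 3k$ for some positive integer $k$.

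Next I would invoke Theorem \ref{Kreduce}, which in the case $n = 3k$ gives $S_{3k} \equiv kS_1 + kS_2$ (mod $\mathcal{K}$). The goal then reduces to showing $kS_1 + kS_2 \equiv 0$ (mod $\mathcal{K}$). Since $kS_1 + kS_2 = k(S_1 + S_2)$ and Corollary \ref{zerocor} tells us that $S_1 + S_2 \equiv 0$ (mod $\mathcal{K}$), all that remains is to promote the single-copy cancellation to $k$ copies.

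To do this I would use the fact, noted in Section 1.1, that indistinguishability modulo $\mathcal{U}$ is a congruence relation and hence compatible with disjunctive sum: if $A \equiv A'$ and $B \equiv B'$ then $A + B \equiv A' + B'$ (mod $\mathcal{K}$). Applying this with $A = S_1 + S_2 \equiv 0$ repeatedly, or equivalently by a one-line induction on $k$, yields $k(S_1 + S_2) \equiv 0$ (mod $\mathcal{K}$), and therefore $S_{3k} \equiv 0$ (mod $\mathcal{K}$), as claimed.

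There is essentially no obstacle here, since every ingredient is already established; the only point requiring a moment's care is the final step, where one must justify that $k$-fold summation preserves equivalence with $0$. This follows directly from the congruence property and requires no re-examination of outcomes. I note in passing that one could instead bypass Corollary \ref{zerocor} and argue straight from Theorem \ref{12outcome} that $o^-(kS_1 + kS_2 + X) = o^-(X)$ for every $X \in \mathcal{K}$, but the congruence route is shorter and cleaner.
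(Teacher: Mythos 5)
Your proof is correct and follows the same route as the paper: apply Theorem \ref{Kreduce} in the case $n=3k$ to get $S_n \equiv kS_1+kS_2 = k(S_1+S_2)$ and then cancel using Corollary \ref{zerocor}. Your extra care in noting that indistinguishability is a congruence (so the cancellation lifts to $k$ copies) and in handling $n=0$ simply makes explicit what the paper leaves implicit.
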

\begin{proof} This is clear from Theorem \ref{Kreduce} and the previous corollary, since if $n=3k$ then $S_n$ reduces to $kS_1+kS_2=k(S_1+S_2)\equiv 0$ (mod $\mathcal{K}$).
\end{proof}

  With Theorem \ref{12outcome} its corollaries, we can  identify the mis\`ere monoid of {\sc partizan kayles}.  Every position in $\mathcal{K}$ is of the form $kS_1$, for an integer $k$, where $kS_1=|k|S_2$ if $k<0$.  The monoid is thus a group, isomorphic to the integers under addition.  We have
$$\mathscr{M}_{\mathcal{K}} = \langle  0,S_1,S_2 \v S_1+S_2=0 \rangle,$$
with outcome partition
$$\N^-=\{kS_2 \v k\geq0,k\equiv 0 (mod\; 3) \}, \P^-=\{kS_2 \v k>0,k\equiv 1 (mod\; 3)\},$$
$$ \R^- = \{kS_1,jS_2 \v k>0, j>0, j\equiv 2 (mod\; 3)\}, \L^-=\emptyset.$$

It would be nice to go a few  steps further and answer the following questions.

\begin{enumerate}
\item Can we `look' at a general sum of strips and determine the outcome, without having to first reduce the position to single squares and dominoes?
\item Can we determine the optimal move for a player when he or she has a winning strategy?
\end{enumerate}

The next theorem precisely answers question 1, by describing the outcome of a general Kayles position {\em without} directly computing its reduction into $S_1$ and $S_2$ pieces.  We must simply compare the number of pieces of length congruent to $1$ modulo $3$ to the number of those congruent to $2$ modulo $3$.  In fact, there is no new argument here: this is a compression of the two steps already discussed --- the reduction into $S_1$ and $S_2$ pieces (Theorem \ref{Kreduce}) and the outcome of $kS_1+jS_2$ (Theorem \ref{12outcome}).

\begin{thm} \label{xy}
If $G$ is a {\sc partizan kayles} position with $x$ pieces of length  $1$ modulo $3$ and $y$ pieces of length $2$ modulo $3$, then
$$o^-(G)=\begin{cases}
\N^-, & \text{if } x=y, \\ &\text{or if } x<y \text{ and } x+2y\equiv 0 \text{(mod $3$)};\\
\R^-, & \text{if } x>y, \\& \text{or if } x<y \text{ and } x+2y\equiv 1 \text{(mod $3$)};\\
\P^-, & \text{if } x<y \text{ and } x+2y\equiv 2 \text{(mod $3$)}.
\end{cases}
$$
\end{thm}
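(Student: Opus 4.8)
The plan is to reduce Theorem~\ref{xy} entirely to the two results already proved, namely the reduction Theorem~\ref{Kreduce} and the outcome computation Theorem~\ref{12outcome}, so that no new game-theoretic argument is needed. The key observation is that the outcome formula in Theorem~\ref{xy} has exactly the same shape as the formula in Theorem~\ref{12outcome}, with $x$ playing the role of $k$ and $y$ playing the role of $j$. So the entire task is to show that a general position $G$, built as a disjunctive sum of strips, is equivalent modulo $\mathcal{K}$ to $xS_1 + yS_2$, where $x$ counts the components whose length is $\congg{1}$ and $y$ counts those whose length is $\congg{2}$.

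First I would write $G = \sum_i S_{n_i}$ and apply Theorem~\ref{Kreduce} component by component, using that indistinguishability is a congruence relation (so it respects disjunctive sums). Each $S_{n_i}$ with $n_i \geq 3$ reduces to $k_iS_1 + k_iS_2$, $(k_i+1)S_1 + k_iS_2$, or $k_iS_1 + (k_i+1)S_2$ according to the residue of $n_i$ modulo $3$; the small cases $S_0 \equiv 0$, $S_1$, and $S_2$ fit the same pattern (with $S_0$ contributing nothing). The crucial bookkeeping point is the \emph{surplus} of $S_1$'s over $S_2$'s contributed by each component: a component of length $\congg{0}$ contributes an equal number of each (surplus $0$); a component of length $\congg{1}$ contributes one extra $S_1$ (surplus $+1$); and a component of length $\congg{2}$ contributes one extra $S_2$ (surplus $-1$). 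Summing over all components, $G$ reduces to some $aS_1 + bS_2$ with $a - b = x - y$, where $x$ and $y$ are the counts in the statement.

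Next I would invoke Corollary~\ref{zerocor}, that $S_1 + S_2 \equiv 0 \pmod{\mathcal{K}}$, to cancel matched $S_1$--$S_2$ pairs: $aS_1 + bS_2 \equiv (a-b)S_1 \equiv (x-y)S_1$ if $x \geq y$, and $\equiv (y-x)S_2$ if $x < y$. This collapses $G$ to a canonical representative depending only on $x$ and $y$. Then applying Theorem~\ref{12outcome} to $xS_1 + yS_2$ directly yields the three outcome cases, provided the residue class of the relevant quantity $k + 2j$ is preserved. Here I would check that $x + 2y$ is the correct invariant: since $S_1 + S_2 \equiv 0$ and $1 + 2\cdot 1 = 3 \congg{0}$, cancelling a pair does not change $x + 2y$ modulo $3$, which is precisely the consistency already noted in the proof of Corollary~\ref{zerocor}.

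The main obstacle, and the only real content beyond quoting the earlier theorems, is verifying that the residue $x + 2y \pmod 3$ computed from the \emph{original} components agrees with $k + 2j \pmod 3$ for the reduced form $kS_1 + jS_2$. This is a modular-arithmetic check: each component of length $n$ contributes a pair $(s_1, s_2)$ of single-square and domino counts with $s_1 + 2s_2 \equiv n \pmod 3$ (since the reduction preserves total square count, and a multiple of $3$ vanishes), so summing gives $k + 2j \equiv \sum_i n_i \equiv x + 2y \pmod 3$, the last congruence because components of length $\congg{0}$ contribute $0$, those of length $\congg{1}$ contribute $1$, and those of length $\congg{2}$ contribute $2$. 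Once this alignment is established, the three cases of Theorem~\ref{xy} follow immediately from the corresponding cases of Theorem~\ref{12outcome}.
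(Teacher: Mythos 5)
Your proposal is correct and takes essentially the same route as the paper: the paper offers no separate proof of Theorem~\ref{xy}, remarking only that it is a ``compression'' of the reduction (Theorem~\ref{Kreduce}) and the outcome computation (Theorem~\ref{12outcome}), which is exactly the two-step argument you give. Your modular bookkeeping --- componentwise surplus summing to $x-y$ and the invariant $k+2j \equiv \sum_i n_i \equiv x+2y \pmod{3}$, with $S_1+S_2\equiv 0$ used to cancel pairs --- simply makes explicit the details the paper leaves to the reader.
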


Finally, Theorem \ref{howtowin} answers our second question, of most interest to anyone actually playing {\sc partizan kayles}:  {\em how} do you win a general non-reduced {\sc partizan kayles} position, when you can? The winning moves described below can be confirmed using Theorem \ref{xy}

\begin{thm} \label{howtowin}
If Left can win a {\sc partizan kayles} position, then she can win playing at the end of a strip of length $1$ modulo $3$, when possible, or the end of a strip of length $2$ modulo $3$, otherwise.
If Right can can win a {\sc partizan kayles} position, then he can win playing at the end of a strip of length $2$ modulo $3$, when possible, or one away from the end of a strip of length $1$ modulo $3$, otherwise.
\end{thm}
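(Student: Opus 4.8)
The plan is to reduce everything to the outcome characterization of Theorem \ref{xy} and simply track, under each prescribed move, the pair $(x,y)$ counting the components of length $\equiv 1$ and $\equiv 2 \pmod 3$ (components of length $\equiv 0$ being irrelevant to the outcome). First I would record what ``winning move'' means in terms of target outcome classes. Since $\L^-=\emptyset$ (Lemma \ref{noleft}), the player to move who is Left can win exactly when $G\in\N^-$, and a Left move to $H$ wins iff Right-to-move loses in $H$, i.e.\ $H\in\P^-$. The player to move who is Right can win exactly when $G\in\N^-\cup\R^-$, and a Right move to $H$ wins iff Left-to-move loses in $H$, i.e.\ $H\in\P^-\cup\R^-$. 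Via Theorem \ref{xy} these targets become explicit inequality/congruence conditions on the new counts $(x',y')$.

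Next I would compute the effect of each prescribed move. Placing a single square at the end of $S_n$ yields $S_{n-1}$; placing a domino at the end yields $S_{n-2}$; playing one away from the end replaces $S_n$ by $S_1+S_{n-3}$. Reading these modulo $3$ gives the shifts: Left at the end of a $\equiv 1$ strip sends $(x,y)\mapsto(x-1,y)$; Left at the end of a $\equiv 2$ strip sends $(x,y)\mapsto(x+1,y-1)$; Right at the end of a $\equiv 2$ strip sends $(x,y)\mapsto(x,y-1)$; and Right one away from the end of a $\equiv 1$ strip sends $(x,y)\mapsto(x+1,y)$.

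With the shifts in hand the verification is a routine check over the subcases of Theorem \ref{xy}. For Left with $G\in\N^-$: if $x\geq 1$ the primary move gives $(x-1,y)$, which in the subcase $x=y$ yields $x'<y'$ with $x'+2y'\equiv 2$, and in the subcase $x<y,\ x+2y\equiv 0$ again yields $x'<y'$ with $x'+2y'\equiv 2$, so $H\in\P^-$; if $x=0<y$ then $2y\equiv 0$ forces $y\equiv 0\pmod 3$, and the secondary move gives $(1,y-1)$ with $x'<y'$ and $x'+2y'\equiv 2$, again $\P^-$. For Right with $G\in\N^-\cup\R^-$: whenever $y\geq 1$ the primary move $(x,y-1)$ lands in $\P^-\cup\R^-$ in every subcase (using $x=y$, or $x+2y\equiv 0,1$, after noting the boundary $x=y-1$ is excluded by the relevant congruence); and when $y=0$ with $x\geq 1$ the secondary move gives $(x+1,0)$ with $x'>y'$, hence $\R^-$.

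The part needing care, rather than the arithmetic, is the existence of the prescribed strip and the genuinely degenerate positions. I would first confirm that in each winning case the required strip exists and is long enough: a $\equiv 2$ strip has length $\geq 2$, so admits both a Left square and a Right domino at its end, and a $\equiv 1$ strip used by Right for the one-away move must have length $\geq 4$. The delicate situations are (i) every component of length $\equiv 0\pmod 3$ (so $x=y=0$ and $G\equiv 0\in\N^-$), where the prescription is vacuous yet the mover still wins by playing at the end of a $\equiv 0$ strip, thereby creating a single $\equiv 2$ or $\equiv 1$ piece; and (ii) a mover winning by being unable to move (e.g.\ Right facing $kS_1$), a \mise win with no move at all. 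I would dispatch these explicitly, observing that they lie outside the literal statement but remain consistent with it, and then conclude that in all remaining positions the displayed move reaches the losing class required of the opponent, which is the main obstacle to a fully airtight argument.
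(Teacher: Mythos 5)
Your proposal is correct and follows exactly the route the paper intends: the paper gives no written proof of Theorem \ref{howtowin}, merely remarking beforehand that the winning moves ``can be confirmed using Theorem \ref{xy}'', and your bookkeeping of the $(x,y)$ shifts with a subcase check against Theorem \ref{xy} is precisely that confirmation, carried out in full (including the key observations that the boundary $x=y-1$ is excluded by the congruence conditions and that the prescribed strips are long enough for the prescribed placements). The only slight omission is a third degenerate family beyond your (i) and (ii): Right to move with $y=0$, $x>0$, every strip of length $\equiv 1 \pmod 3$ a single square, but some strip of length $\equiv 0 \pmod 3$ present (e.g.\ $S_1+S_3$), where neither prescribed Right move exists yet Right wins by playing in the $\equiv 0$ strip, reaching $(x+1,0)\in \R^-$ --- a case of exactly the kind you flag, handled the same way, and one the theorem's literal statement itself glosses over.
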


\section{Discussion:  mis\`ere invertibility}
Within a  universe $\mathcal{U}$, a game $G$ may satisfy $G+\overline{G}\equiv 0$ (mod $\mathcal{U}$), and then $G$ is said to be {\em invertible} modulo $\mathcal{U}$.  For example, normal-play canonical-form numbers are invertible modulo the universe of all such positions \cite{MilleyRenault}.  If $G+\overline{G}\not \equiv 0$ (mod $\mathcal{U}$), it is tempting to say that $G$ is {\em not invertible} modulo $\mathcal{U}$ --- but once again mis\`ere games surprise us!  It is possible for some other position $H\not \equiv \overline{G}$ (mod $\mathcal{U}$) to satisfy $G+H\equiv 0$ (mod $\mathcal{U}$); that is, $G$ may have an additive inverse that is not its conjugate. The universe of {\sc partizan kayles} is the only known partizan example of such a situation: here we have $S_1+S_2\equiv 0$ (mod $\mathcal{K}$), with $S_1\not \equiv \overline{S_2}$ and $S_2 \not \equiv \overline{S_1}$. 

In fact, these comparisons are not even defined, as $\overline{S_1}$ and $\overline{S_2}$ do not occur in the universe $\mathcal{K}$. $\overline{S_1}$ would be a position in which Left has no move and Right has one move (that is, $\overline{S_1} = \{\cdot \v 0\} = -1$), and $\overline{S_2}$ would be a position in which Left can move to $0$ and Right can move to $-1$.  Even if we generalize the definition of equivalence to allow $G$ and $H$ to be compared modulo $\mathcal{U}$ without requiring that both are in $\mathcal{U}$, we do not obtain $S_1\equiv \overline{S_2}$; there is actually no universe\footnote{Assuming that any universe must contain zero, $S_1$ and $\overline{S_2}$ are always distinguishable.} in which $S_1$ and $\overline{S_2}$ are equivalent, since $S_1\in \R^-$ and $\overline{S_2} \in \P^-$.
This brings us to another oddity of {\sc partizan kayles}: there is a position in $\R^-$ whose additive inverse is in $\P^-$.
 There is no other known instance of an inverse pair having `asymmetric' outcomes in this way. It is likely  a symptom of the fact  that $\mathcal{K}$ is not closed under conjugation.

In \cite{Milley}, it was conjectured that $G+H\equiv 0$ (mod $\mathcal{U}$) implies $H \equiv \overline{G}$ (mod $\mathcal{U}$) whenever $\mathcal{U}$ is closed under conjugation. 
 This, however, is false, as an impartial counterexample appears in \cite{PlambS2008} (appendix A.6).  It was already known in \cite{Milley} (by the results presented here) that the stronger statement, removing the closure condition, is false.  The question now is whether a still weaker statement can be shown true: is there some condition on the universe $\mathcal{U}$ so that $G$ being invertible implies $G+\overline{G}\equiv 0$ (mod $\mathcal{U}$)? Is there a condition on the specific game $G$ that guarantees the invertibility of $G$? Can we find more counterexamples (so far there is only one) to the original conjecture of [3]?

Without Plambeck's theory of indistinguishability (equivalence), no non-zero game is invertible under mis\`ere play.  We now have a meaningful concept of additive inverses in restricted universes, but as {\sc partizan kayles} shows, invertibility for mis\`ere games is still strikingly different --- more subtle and less intuitive --- than invertibility for normal games. A better understanding of mis\`ere invertibility is a significant open problem in the growing theory of restricted mis\`ere play.




%

\end{document}